\journal{ }
\theoremstyle{definition}
\newtheorem{theorem}{Theorem}           
\newtheorem{lemma}{Lemma}               
\newtheorem{definition}{Definition}
\newtheorem{remark}{Remark}
\begin{document}

\begin{frontmatter}
	
\title{Dynamics of vertically and horizontally transmitted parasites: Continuous vs Discrete Models}
\author[cmbe]{Priyanka Saha\fnref{fn1}}
\author[cmbe]{Nandadulal Bairagi\corref{cor1}}
\ead{nbairagi.math@jadavpuruniversity.in}
\cortext[cor1]{Corresponding author}
\address[cmbe]{Centre for Mathematical Biology and Ecology\\ Department of Mathematics, Jadavpur University\\ Kolkata-700032, India.}

\begin{abstract}
In this paper we analyze a continuous-time epidemic model and its discrete counterpart, where infection spreads both horizontally and vertically. We consider three cases: model with horizontal and imperfect vertical transmissions, model with horizontal and perfect vertical transmissions, and model with perfect vertical and no horizontal transmissions. Stability of different equilibrium points of both the continuous and discrete systems in all cases are determined. It is shown that the stability criteria are identical for continuous and discrete systems. The dynamics of the discrete system have also shown to be independent of the step-size. Numerical computations are presented to illustrate analytical results of both the systems and their subsystems.
\end{abstract}

\begin{keyword}
	Epidemic model, perfect and imperfect vertical transmissions, horizontal transmission, basic reproduction number, stability.
\end{keyword}

\end{frontmatter}

\section{Introduction}
Mathematical models play significant role in understanding the dynamics of biological phenomena. System of nonlinear differential equations are frequently used to describe these biological models. Unfortunately, nonlinear differential equations in general can not be solved analytically. For this reason, we go for numerical computations of the model system and discretization of the continuous model is essential in this process. Here we shall explore and compare the dynamics of a continuous time epidemic model and its subsystems with their corresponding discrete models.

Lipsitch et al.\ \cite{LNEM95} have investigated the dynamics of vertically and horizontally transmitted parasites of the following population model, where the state variables $X$ and $Y$ represent, respectively, the densities of uninfected and infected hosts at time $t$: 
\begin{eqnarray}\label{continuous_1}
\frac{dX}{dt} & =&  \left[b_x\left\{1-\frac{(X+Y)}{K}\right\}-u_x-\beta Y\right]X+e\left\{1-\frac{(X+Y)}{K}\right\}Y\mbox{,}\\
\frac{dY}{dt} & = & \left[b_y\left\{1-\frac{(X+Y)}{K}\right\}-u_y+\beta X\right]Y\mbox{.}\nonumber
\end{eqnarray}
This model demonstrates the rate equations of a density dependent asexual host populations, where infection spreads through imperfect vertical transmission as well as horizontal transmission. Horizontal transmission of infection follows mass action law with $\beta$ as the proportionality constant. Vertical transmission is imperfect because infected hosts not only give birth of infected hosts at a rate $b_y$ but also produce uninfected offspring at a rate $e$. In case of perfect vertical transmission, however, infected hosts give birth of infected hosts only and in that case $e=0$. Parasites may affect the fecundity and morbidity rates of its host population \cite{HB72,LM96}. It is assumed here that the death rate of infected hosts is higher than that of susceptible hosts, i.e., $u_y>u_x$ and the birth rate of susceptible hosts is higher than that of infected hosts, i.e., $b_x\geq b_y+e$.

Standard finite difference schemes, such as Euler method, Runge--Kutta method etc.\ are frequently used for numerical solutions of both ordinary and partial differential equations. But the behavior of standard finite difference schemes depend heavily on the step size. They fail to preserve positivity of the solutions for all step size. These conventional discretized models also show numerical instability and exhibit spurious behaviors like chaos which are not observed in the corresponding continuous models. In other words, these discrete models are dynamically inconsistent. So it becomes important to construct discrete models which will preserve all the properties of its constituent continuous models without any restriction on the step size. Mickens in 1989 first proposed such nonstandard finite difference (NSFD) scheme \cite{M89} and was shown to have identical dynamics with its corresponding continuous model. It was also demonstrated that the dynamics is completely independent of step size. Successful application of this technique in subsequent time is observed in different biological models \cite{MET03,BB17a,SI10,BB17b,RL13,BET17,G12,BB17}. Here we will discretize a continuous time population model in which parasite transmitted both vertically and horizontally following dynamics preserving nonstandard finite difference (NSFD) method introduced by Mickens \cite{M89}. We present the local stability analysis of both the continuous and discrete systems and prove that the dynamic behaviour of both systems are identical with same parameter restrictions. Moreover, we prove that the proposed discrete models are positive for all step size and dynamically consistent.

The paper is organized as follows. We present the analysis of the continuous time model and its subsystems in the next section. Section 3 contains the corresponding analysis in discrete system. In section 4, we present extensive numerical simulations in favour of our theoretical results. Finally, a summary is presented in section 5.
\section{Analysis of Continuous Time Models}
Lipsitch et al.\ \cite{LNEM95} analyzed the system \eqref{continuous_1} with perfect vertical transmissions (the case $e=0$) and horizontal transmission as well as the system with perfect vertical transmission but no horizontal transmission (the case $\beta=0$, $e=0$). The general case (when $e\neq0$, $\beta\neq0$) was analyzed numerically and its importance in the prevalence of infection was discussed. Here we first give stability analysis of equilibrium points of the general model \eqref{continuous_1} and deduce the results of subcases, whenever applicable.

The continuous system \eqref{continuous_1} has two boundary equilibrium points $E_0=(0,0)$, $E_1=(\bar{X},0)$, where $\bar{X}=K\left(1-\frac{u_x}{b_x}\right)$ and one interior equilibrium point $E^*=(X^*,Y^*)$, where the equilibrium densities of susceptible and infected hosts are given by\\
$~~~~~~~~~~~~~~~~X^*=\frac{-B+\sqrt{B^2-4AC}}{2A}$ and $Y^*  =  \frac{(\beta K-b_y)X^*}{b_y}+\frac{K(b_y-u_y)}{b_y}$,\\
with
\begin{eqnarray}\label{interior coefficient_1}
\left\{
\begin{array}{ll}
A  = \frac{\beta K}{b_y^2}\{b_y(b_x-b_y-e)+\beta K(b_y+e)\}\mbox{,}\\
B  = -K(b_x-u_x)+K(b_x+\beta K+e)\frac{(b_y-u_y)}{b_y}+2eK\frac{(\beta K-b_y)(b_y-u_y)}{b_y^2}-\frac{eK(\beta K-b_y)}{b_y}\mbox{,}\\
C  = -\frac{eK^2(b_y-u_y)u_y}{b_y^2}\mbox{.}
\end{array}
\right.
\end{eqnarray}
The trivial equilibrium $E_0$ exists for all parameter values, but the infection free equilibrium $E_1$ exists if $b_x>u_x$. The coexisting equilibrium point $E^*$ exists if $b_x>u_x$, $b_y>u_y$, $b_y>\beta K$ and $\frac{K}{X^*}>\frac{b_y-\beta K}{b_y-u_y}$.

We have the following theorem for the stability of different equilibrium points.
\begin{theorem}\label{cont_theorem_1}
	System \eqref{continuous_1} is locally asymptotically stable around the equilibrium point
	\begin{itemize}
		\item[(i)] $E_0$ if $b_x<u_x$ and $b_y<u_y$.
		\item[(ii)] $E_1$ if $b_x>u_x$ and $R_0<1$, where $R_0=V_0+H_0$ with $V_0=\frac{b_y}{b_x}\frac{u_x}{u_y}$, $H_0=\frac{\beta}{u_y}K\left(1-\frac{u_x}{b_x}\right)$ and it is unstable whenever $R_0>1$.
		\item[(iii)] $E^*$ if $b_x>u_x$, $b_y>u_y$, $b_y>\beta K$ and $\frac{K}{X^*}>\frac{b_y-\beta K}{b_y-u_y}$.
	\end{itemize}
\end{theorem}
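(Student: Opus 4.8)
The plan is to run a standard linearization argument: compute the Jacobian matrix $J(X,Y)$ of the right-hand side of \eqref{continuous_1}, evaluate it at each of the three equilibria, and then read stability off the eigenvalues for the boundary points and off the Routh--Hurwitz conditions $\operatorname{tr}J<0$, $\det J>0$ for the interior point.

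First I would dispatch the two boundary equilibria, which are the easy cases. At $E_0=(0,0)$ the entry $\partial g/\partial X$ vanishes because every term of $g$ carries a factor $Y$, so $J(E_0)$ is upper triangular with diagonal entries $b_x-u_x$ and $b_y-u_y$; both are negative exactly when $b_x<u_x$ and $b_y<u_y$, which gives (i). At $E_1=(\bar X,0)$ the factor $Y$ again annihilates $\partial g/\partial X$, so $J(E_1)$ is triangular as well. Using $\bar X=K(1-u_x/b_x)$ to simplify, the first diagonal entry collapses to $-(b_x-u_x)<0$ while the second becomes $\tfrac{b_y u_x}{b_x}-u_y+\beta\bar X$. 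The decisive observation is that this second entry equals $u_y(R_0-1)$ with $R_0=V_0+H_0$ as defined in the statement, so its sign is governed entirely by whether $R_0\lessgtr 1$; this yields both the stability claim and the instability claim in (ii).

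The substantive case is $E^*$. Here I would first use the equilibrium identities to simplify the four Jacobian entries. Writing $\phi:=1-(X^*+Y^*)/K$, the $Y$-equation forces $b_y\phi-u_y+\beta X^*=0$, which collapses the $(2,2)$ entry to $-b_yY^*/K$ and reduces the $(2,1)$ entry to $Y^*(\beta K-b_y)/K$, negative by the hypothesis $b_y>\beta K$. The $X$-equation then lets me eliminate the bracket $b_x\phi-u_x-\beta Y^*$ from the $(1,1)$ entry. A small but essential preliminary is that $\phi>0$ at any positive interior equilibrium: if $\phi\le 0$, the $X$-equation would force $b_x\phi\ge u_x+\beta Y^*>0$, a contradiction. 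With $\phi>0$ every term of the $(1,1)$ entry is negative, so $\operatorname{tr}J(E^*)<0$ follows at once.

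The hard part will be the determinant, the one step demanding genuine algebra. I would expand $\det J(E^*)=J_{11}J_{22}-J_{12}J_{21}$ and regroup; after factoring out $Y^*/K>0$, the coefficient combinations of the form $b_y-(b_y-\beta K)$ collapse to $\beta K$, and the surviving expression reduces to $\tfrac{b_y e\phi Y^*}{X^*}+\beta X^*(b_x-b_y+\beta K)+\beta eY^*+(b_y-\beta K)e\phi$. Each summand is nonnegative once one invokes $\phi>0$, $b_y>\beta K$, and the standing assumption $b_x\ge b_y+e$ (which secures $b_x-b_y+\beta K>0$), and the horizontal-transmission term $\beta X^*(b_x-b_y+\beta K)$ is strictly positive. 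Hence $\det J(E^*)>0$, and together with $\operatorname{tr}J(E^*)<0$ the Routh--Hurwitz criterion delivers the local asymptotic stability asserted in (iii). The notable feature that the stability conditions coincide with the existence conditions is explained precisely by this regrouping, and the subcases $e=0$ and $e=\beta=0$ follow by specialization.
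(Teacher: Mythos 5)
Your proposal is correct and follows essentially the same route as the paper: triangular Jacobians give the eigenvalues at $E_0$ and $E_1$ (with the second eigenvalue at $E_1$ rewritten as $u_y(R_0-1)$), and the Routh--Hurwitz conditions $\operatorname{tr}<0$, $\det>0$ settle $E^*$, with positivity of the determinant read off term by term using $b_x\ge b_y+e$ and the existence conditions. Your regrouping of $\det J(E^*)$ is an equivalent rearrangement of the paper's expression (both evaluate identically), and your explicit contradiction argument for $1-(X^*+Y^*)/K>0$ makes precise a step the paper only asserts from the existence conditions.
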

\begin{proof}
	Local stability of the system around an equilibrium point is performed following linearization technique. For it, we compute the variational matrix of system \eqref{continuous_1} at an arbitrary fixed point $(X,Y)$ as
	\begin{eqnarray}\label{cont_variational_1}
	V(X,Y)=
	\begin{pmatrix}
	a_{11} & a_{12}\\
	a_{21} & a_{22}
	\end{pmatrix}\mbox{,}
	\end{eqnarray}
	where
	\begin{eqnarray}\label{cont_vari_element_1}
	\left\{
	\begin{array}{ll}
	a_{11} = b_x\left\{1-\frac{(X+Y)}{K}\right\}-\frac{b_xX}{K}-u_x-\beta Y-\frac{eY}{K}\mbox{,}\\
	a_{12} = -\frac{b_xX}{K}-\beta X+e\left\{1-\frac{(X+Y)}{K}\right\}-\frac{eY}{K}\mbox{,}\\
	a_{21} = -\frac{b_yY}{K}+\beta Y\mbox{,}\\
	a_{22} = b_y\left\{1-\frac{(X+Y)}{K}\right\}-\frac{b_yY}{K}-u_y+\beta X\mbox{.}
	\end{array}
	\right.
	\end{eqnarray}
	At the trivial fixed point $E_0$, the variational matrix is
	\begin{eqnarray}
	V(E_0)=
	\begin{pmatrix}
	b_x-u_x & e\\
	0 & b_y-u_y\nonumber
	\end{pmatrix}\mbox{.}
	\end{eqnarray}
	Corresponding eigenvalues are given by $\lambda_1=b_x-u_x$ and $\lambda_2=b_y-u_y$. $E_0$ will be locally asymptotically stable if $\lambda_1=b_x-u_x<0$ and $\lambda_2=b_y-u_y<0$; i.e., if $b_x<u_x$ and $b_y<u_y$. Thus, if the birth rates of susceptible hosts and infected hosts are less than their respective death rates, then both populations goes to extinction and the trivial equilibrium will be stable.\\
	At the axial equilibrium point $E_1$, the variational matrix is computed as
	\begin{eqnarray}
	V(E_1)=
	\begin{pmatrix}
	-b_x\left(1-\frac{u_x}{b_x}\right) & -(b_x+\beta K)\left(1-\frac{u_x}{b_x}\right)+\frac{eu_x}{b_x}\\
	0 & \frac{b_yu_x}{b_x}-u_y+\beta K\left(1-\frac{u_x}{b_x}\right)\nonumber
	\end{pmatrix}\mbox{.}
	\end{eqnarray}
	The corresponding eigenvalues are $\lambda_1=-b_x\left(1-\frac{u_x}{b_x}\right)$ and $\lambda_2=\frac{b_yu_x}{b_x}-u_y+\beta K\left(1-\frac{u_x}{b_x}\right)$. It is to be noted that $\lambda_1<0$ whenever $E_1$ exists. The other eigenvalue can be rearranged as $\lambda_2=u_y\left\{\frac{b_y}{b_x}\frac{u_x}{u_y}+\frac{\beta}{u_y}K\left(1-\frac{u_x}{b_x}\right)-1\right\}$. Thus $\lambda_2<0$ whenever $R_0<1$, where $R_0=V_0+H_0$. Note that $V_0=\left(\frac{b_y}{b_x}\right)\left(\frac{u_x}{u_y}\right)$ is the basic reproduction number due to vertical transmission and $H_0=\frac{\beta}{u_y}\bar{X}$ is the basic reproduction number due to horizontal transmission.\\
	At the interior equilibrium point $E^*$, the variational matrix is
	\begin{eqnarray}\label{cont_vari_int_1}
	V(E^*)=
	\begin{pmatrix}
	a_{11}^* &  a_{12}^*\\
	a_{21}^* & a_{22}^*
	\end{pmatrix}\mbox{,}
	\end{eqnarray}
	where
	\begin{eqnarray}\label{cont_vari_int_ele_1}
	\left\{
	\begin{array}{ll}
	a_{11}^* = -\frac{eY^*}{X^*}\left\{1-
	\frac{(X^*+Y^*)}{K}\right\}-\frac{b_xX^*}{K}-\frac{eY^*}{K}\mbox{,}\\
	a_{12}^* = -\frac{b_xX^*}{K}-\beta X^*+e\left\{1-\frac{(X^*+Y^*)}{K}\right\}-\frac{eY^*}{K}\mbox{,}\\
	a_{21}^* = -\frac{b_yY^*}{K}+\beta Y^*\mbox{,}\\
	a_{22}^* = -\frac{b_yY^*}{K}\mbox{.}
	\end{array}
	\right.
	\end{eqnarray}
	$E^*$ will be stable if and only if $Trace(V(E^*))<0$ and $Det(V(E^*))>0$. From the existence condition of $E^*$, one can observe that both $a_{11}^*$ and $a_{22}^*$ are negative. Thus, $Trace(V(E^*))<0$. After some simple algebraic manipulation, one gets $$Det(V(E^*))=\frac{eu_yY^*}{X^*}\left(1-\frac{u_y}{b_y}\right)+\frac{\beta X^*Y^*}{K}(b_x-b_y-e)+\beta^2X^*Y^*+\frac{e\beta^2X^*Y^*}{b_y}\mbox{.}$$
	Thus, whenever $E^*$ exists and $b_x \geq b_y+e$, we have $Det(V(E^*))>0$ and $E^*$ becomes locally asymptotically stable. This completes the proof.
\end{proof}
\subsection{Model with Horizontal and Perfect Vertical Transmissions}
The vertical transmission is perfect if infected hosts give birth to infected offspring only. In this case $e=0$ and the system \eqref{continuous_1} becomes
\begin{eqnarray}\label{continuous_2}
\frac{dX}{dt} &=& b_xX\left\{1-\frac{(X+Y)}{K}\right\}-u_xX-\beta XY\mbox{,}\\
\frac{dY}{dt} &=& b_yY\left\{1-\frac{(X+Y)}{K}\right\}-u_yY+\beta XY\mbox{.}\nonumber
\end{eqnarray}
The continuous system \eqref{continuous_2} has four equilibrium points, viz.\ $E_0^H=(0,0)$, $E_1^H=(\bar{X},0)$, $E_2^H=(0,\bar{Y})$ and interior equilibrium point $E_H^*=(X_H^*,Y_H^*)$, where
\begin{eqnarray}
\bar{X}&=&K\left(1-\frac{u_x}{b_x}\right)\mbox{,} ~\bar{Y}=K\left(1-\frac{u_y}{b_y}\right) ~\mbox{and}\nonumber\\
X_H^*&=&\frac{b_xu_y-b_yu_x-\beta K(b_y-u_y)}{\beta(\beta K+b_x-b_y)}\mbox{,} ~
Y_H^*  =  \frac{b_yu_x-b_xu_y+\beta K(b_x-u_x)}{\beta(\beta K+b_x-b_y)}\mbox{.}\nonumber
\end{eqnarray}
The trivial equilibrium point $E_0^H$ always exists, $E_1^H$ exists if $b_x>u_x$, $E_2^H$ exists if $b_y>u_y$ and the interior fixed point $E_H^*$ exists if $b_x>u_x$, $b_y>u_y$, $\frac{b_xu_y}{b_y}>u_x+\beta K\left(1-\frac{u_y}{b_y}\right)$ and $R_0>1$, where $R_0=V_0+H_0$ with $V_0=\frac{b_y}{b_x}\frac{u_x}{u_y}$, $H_0=\frac{\beta}{u_y}K\left(1-\frac{u_x}{b_x}\right)$. The following results are known \cite{LNEM95}.
\begin{theorem}\label{cont_theorem_2}
	System \eqref{continuous_2} is locally asymptotically stable around the equilibrium point
	\begin{itemize}
		\item[(i)] $E_0^H$ if $b_x<u_x$ and $b_y<u_y$,
		\item[(ii)] $E_1^H$ if $b_x>u_x$ and $R_0<1$, 
		\item[(iii)] $E_2^H$ if $b_y>u_y$ and $\frac{b_xu_y}{b_y}<u_x+\beta K\left(1-\frac{u_y}{b_y}\right)$,
		\item[(iv)] $E_H^*$ if $b_x>u_x$, $b_y>u_y$, $\frac{b_xu_y}{b_y}>u_x+\beta K\left(1-\frac{u_y}{b_y}\right)$ and $R_0>1$.
	\end{itemize}
\end{theorem}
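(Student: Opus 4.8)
The plan is to follow exactly the linearization strategy used for Theorem~\ref{cont_theorem_1}. Since system~\eqref{continuous_2} is the $e=0$ specialization of \eqref{continuous_1}, I would reuse the variational matrix \eqref{cont_variational_1}--\eqref{cont_vari_element_1} with $e$ set to zero, and then read off stability from the trace--determinant (Routh--Hurwitz) criterion for a $2\times2$ system: the equilibrium is locally asymptotically stable if and only if $Trace<0$ and $Det>0$.

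For the three boundary equilibria the Jacobian is triangular, so its eigenvalues are just the diagonal entries. At $E_0^H=(0,0)$ the matrix is diagonal with entries $b_x-u_x$ and $b_y-u_y$, giving (i) at once. At $E_1^H=(\bar X,0)$ the lower-left entry vanishes, leaving eigenvalues $-b_x(1-u_x/b_x)$ and $\frac{b_yu_x}{b_x}-u_y+\beta K(1-u_x/b_x)$; rewriting the latter as $u_y(R_0-1)$ exactly as in Theorem~\ref{cont_theorem_1} yields (ii). At $E_2^H=(0,\bar Y)$ the upper-right entry vanishes by the symmetry of the two populations, and using $1-\bar Y/K=u_y/b_y$ the diagonal entries become $\frac{b_xu_y}{b_y}-u_x-\beta K(1-u_y/b_y)$ and $-b_y(1-u_y/b_y)$; the second is negative whenever $E_2^H$ exists, and the first is negative precisely under the inequality stated in (iii).

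The interior equilibrium $E_H^*$ is where the real (though still short) work lies. The key step is to exploit the steady-state relations $b_x\{1-(X_H^*+Y_H^*)/K\}-u_x-\beta Y_H^*=0$ and $b_y\{1-(X_H^*+Y_H^*)/K\}-u_y+\beta X_H^*=0$, obtained by dividing the two equilibrium equations by $X_H^*$ and $Y_H^*$. Substituting these into the diagonal entries collapses them to $a_{11}^*=-b_xX_H^*/K$ and $a_{22}^*=-b_yY_H^*/K$, so that $Trace(V(E_H^*))<0$ holds automatically. I would then compute the determinant: after the off-diagonal terms $a_{12}^*=-X_H^*(\beta+b_x/K)$ and $a_{21}^*=Y_H^*(\beta-b_y/K)$ are multiplied out, the $b_xb_y/K^2$ contributions cancel and the determinant factors neatly as $Det(V(E_H^*))=\frac{\beta X_H^*Y_H^*}{K}(\beta K+b_x-b_y)$.

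The main obstacle is the final sign check: I must show the existence hypotheses force $\beta K+b_x-b_y>0$, which is needed for $Det>0$. This is not an extra assumption but a consequence of the positivity of $X_H^*,Y_H^*$. Indeed, the condition $R_0>1$ rearranges to $b_yu_x-b_xu_y+\beta K(b_x-u_x)>0$, i.e.\ the numerator of $Y_H^*$ is positive, and the condition $\frac{b_xu_y}{b_y}>u_x+\beta K(1-u_y/b_y)$ rearranges to $b_xu_y-b_yu_x-\beta K(b_y-u_y)>0$, i.e.\ the numerator of $X_H^*$ is positive; since both coordinates are positive and share the common denominator $\beta(\beta K+b_x-b_y)$, that denominator must itself be positive, giving $\beta K+b_x-b_y>0$ and hence $Det(V(E_H^*))>0$. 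Combined with the automatic negativity of the trace, this establishes (iv) and completes the proof.
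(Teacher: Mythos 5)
Your proof is correct, but note that the paper itself offers no argument for Theorem~\ref{cont_theorem_2}: it simply states ``The following results are known'' and cites Lipsitch et al.\ [1]. What you have written is the natural proof by specializing the linearization of Theorem~\ref{cont_theorem_1} to $e=0$, and every step checks out: the three boundary Jacobians are indeed triangular with the diagonal entries you list, the steady-state relations do collapse the diagonal of $V(E_H^*)$ to $-b_xX_H^*/K$ and $-b_yY_H^*/K$, and the determinant does factor as $\frac{\beta X_H^*Y_H^*}{K}(\beta K+b_x-b_y)$ after the $b_xb_y/K^2$ terms cancel. Your handling of the one genuinely delicate point --- the sign of $\beta K+b_x-b_y$ --- is legitimate: the two stated inequalities make both numerators of $X_H^*$ and $Y_H^*$ positive, so interiority of $E_H^*$ forces the common denominator $\beta(\beta K+b_x-b_y)$ to be positive. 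Be aware, though, that this leans on the assumption that $E_H^*$ is an interior (positive) equilibrium rather than deriving it from the four listed inequalities alone; a self-contained alternative is to invoke the paper's standing hypothesis $b_x\geq b_y+e$, which with $e=0$ gives $b_x\geq b_y$ and hence $\beta K+b_x-b_y\geq\beta K>0$ directly. Either way the determinant is positive, the trace is negative, and all four parts follow.
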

\subsection{Model with Perfect Vertical Transmission and no Horizontal Transmission}
In this case $e=0$, $\beta=0$, and the system \eqref{continuous_1} reduces to
\begin{eqnarray}\label{continuous_3}
\frac{dX}{dt} &=& b_xX\left\{1-\frac{(X+Y)}{K}\right\}-u_xX\mbox{,}\\
\frac{dY}{dt} &=& b_yY\left\{1-\frac{(X+Y)}{K}\right\}-u_yY\mbox{.}\nonumber
\end{eqnarray}
The continuous system \eqref{continuous_3} has three equilibrium points, viz.\ $E_0^V=(0,0)$, $E_1^V=(\bar{X},0)$ and $E_2^V=(0,\bar{Y})$, where $\bar{X}=K\left(1-\frac{u_x}{b_x}\right)$ and $\bar{Y}=K\left(1-\frac{u_y}{b_y}\right)$. The existence conditions for $E_1^V$ and $E_2^V$ are $b_x>u_x$ and $b_y>u_y$, respectively. It is to be noted that no interior equilibrium does exist here. The following results are known \cite{LNEM95}.
\begin{theorem}\label{cont_theorem_3}
	System \eqref{continuous_3} is locally asymptotically stable around the equilibrium point
	\begin{itemize}
		\item[(i)] $E_0^V$ if $b_x<u_x$ and $b_y<u_y$,
		\item[(ii)] $E_1^V$ if $b_x>u_x$ and $\frac{b_y}{u_y}<\frac{b_x}{u_x}$.
		\item[(iii)] The equilibrium point $E_2^V$ is always unstable.
	\end{itemize}
\end{theorem}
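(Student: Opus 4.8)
The plan is to reuse the linearization machinery from the proof of Theorem~\ref{cont_theorem_1}. The variational matrix of \eqref{continuous_3} is obtained simply by setting $e=0$ and $\beta=0$ in \eqref{cont_vari_element_1}, so its entries are $a_{11}=b_x\{1-(X+Y)/K\}-b_xX/K-u_x$, $a_{12}=-b_xX/K$, $a_{21}=-b_yY/K$ and $a_{22}=b_y\{1-(X+Y)/K\}-b_yY/K-u_y$. The decisive structural observation is that, because both cross-transmission terms are absent, each boundary equilibrium renders this matrix triangular: at $E_0^V$ both off-diagonal entries vanish, at $E_1^V=(\bar X,0)$ we have $a_{21}=0$, and at $E_2^V=(0,\bar Y)$ we have $a_{12}=0$. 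Hence in every case the eigenvalues are read off directly as the two diagonal entries, and no quadratic characteristic equation needs to be solved.

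For parts (i) and (ii) I would evaluate the diagonal entries at the respective points. At $E_0^V$ the eigenvalues are $\lambda_1=b_x-u_x$ and $\lambda_2=b_y-u_y$, so both are negative precisely when $b_x<u_x$ and $b_y<u_y$, giving (i). At $E_1^V$ I would use the identity $1-\bar X/K=u_x/b_x$ to simplify $a_{11}$ to $-(b_x-u_x)$, which is negative whenever $E_1^V$ exists, and $a_{22}$ to $b_yu_x/b_x-u_y$. Factoring out $u_x/b_x$ (or equivalently writing $\lambda_2=u_y\{(b_y/u_y)/(b_x/u_x)-1\}$) shows $\lambda_2<0$ iff $b_y/u_y<b_x/u_x$, which is exactly condition (ii).

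The interesting case is (iii), which I expect to be the main obstacle since it is the only claim asserting instability \emph{unconditionally}. At $E_2^V$, using $1-\bar Y/K=u_y/b_y$, the diagonal entries become $a_{11}=b_xu_y/b_y-u_x$ and $a_{22}=-(b_y-u_y)$; the second is negative whenever $E_2^V$ exists, so everything hinges on the sign of $\lambda_1=b_xu_y/b_y-u_x$, and a bare computation leaves this sign ambiguous. The key step is to invoke the standing biological assumptions imposed after model \eqref{continuous_1}, namely $u_y>u_x$ and $b_x\ge b_y+e$, the latter reducing to $b_x\ge b_y$ in the perfect-transmission case $e=0$. From $b_x\ge b_y$ and $u_x<u_y$ one gets $b_x/u_x\ge b_y/u_x>b_y/u_y$, i.e.\ $b_xu_y>b_yu_x$, so that $\lambda_1=b_xu_y/b_y-u_x>0$ always. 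Thus $E_2^V$ always possesses a positive eigenvalue and is a saddle, which establishes (iii). I would close by emphasizing that this instability is structural under the paper's parameter hypotheses rather than a consequence of any additional threshold condition.
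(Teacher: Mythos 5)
Your proof is correct; the paper itself states Theorem~\ref{cont_theorem_3} without proof (citing Lipsitch et al.), and your linearization argument is exactly the approach the paper uses for Theorem~\ref{cont_theorem_1}, exploiting the triangular structure of the Jacobian at each boundary equilibrium so the eigenvalues are the diagonal entries. Your treatment of part (iii) is the one place requiring care and you handle it correctly: the unconditional instability of $E_2^V$ is not a free computation but follows from the standing hypotheses $u_y>u_x$ and $b_x\ge b_y+e$ (hence $b_x\ge b_y$ when $e=0$), which give $b_x/u_x\ge b_y/u_x>b_y/u_y$ and therefore $\lambda_1=b_xu_y/b_y-u_x>0$.
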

\section{Discrete Models}
In this section, we construct three discrete models corresponding to the continuous models \eqref{continuous_1}, \eqref{continuous_2} and \eqref{continuous_3} following nonstandard finite difference method. The objective is to show that all the discrete models have the same dynamic properties corresponding to its continuous counterpart and the dynamics does not depend on the step size.

The NSFD procedures are based on just two fundamental rules \cite{M05,DK05,AL03}: 
\begin{itemize}
	\item [(i)] The discrete first derivative has the representation
	$$\frac{dx}{dt} \rightarrow \frac{x_{k+1}-\psi(h)x_k}{\phi(h)}\mbox{,}~h=\triangle t\mbox{,}$$
	where $\phi(h)$, $\psi(h)$ satisfy the conditions
	$\psi(h)=1+O(h^2)$, $\phi(h)=h+O(h^2)$;
	\item[(ii)] Both linear and nonlinear terms may require a nonlocal representation on the discrete computational lattice.
\end{itemize}
%
%
%
For convenience, we first express the continuous system \eqref{continuous_1} as follows:
\begin{eqnarray}\label{simple_cont_1}
\frac{dX}{dt} & =&  b_xX-\frac{b_xX^2}{K}-\frac{b_xXY}{K}-u_xX-\beta XY+eY-\frac{eXY}{K}-\frac{eY^2}{K}\mbox{,}\nonumber \\
\frac{dY}{dt} & = & b_yY-\frac{b_yXY}{K}-\frac{b_yY^2}{K}-u_yY+\beta XY\mbox{.}
\end{eqnarray}
We now employ the following nonlocal approximations term wise for the system \eqref{simple_cont_1}
\begin{eqnarray}\label{NSFD_approx_1}
\left\{
\begin{array}{ll}
\frac{dX}{dt} \rightarrow \frac{X_{n+1}-X_n}{\phi_1(h)}\mbox{,}~~~~~~~~~~~~~~~~~~~~~\frac{dY}{dt} \rightarrow \frac{Y_{n+1}-Y_n}{\phi_2(h)}\mbox{,}\\
b_xX \rightarrow b_xX_n\mbox{,}~~~~~~~~~~~~~~~~~~~~~~~~~~~~~~~b_yY \rightarrow b_yY_n\mbox{,}\\
b_xX^2 \rightarrow b_xX_nX_{n+1}\mbox{,}~~~~~~~~~~~~~~~~~b_yXY \rightarrow b_yX_nY_{n+1}\mbox{,}\\
XY \rightarrow X_{n+1}Y_n\mbox{,}~~~~~~~~~~~~~~~~~~~~~~~~~b_yY^2 \rightarrow b_yY_nY_{n+1}\mbox{,}\\
u_xX \rightarrow u_xX_{n+1}\mbox{,}~~~~~~~~~~~~~~~~~~~~~~~~~~u_yY \rightarrow u_yY_{n+1}\mbox{,}\\
eY \rightarrow eY_n\mbox{,}~~~~~~~~~~~~~~~~~~~~~~~~~~~~~~~~~\beta XY \rightarrow \beta X_nY_n\mbox{,}\\
eY^2 \rightarrow e\frac{X_{n+1}Y_n^2}{X_n}\mbox{,}
\end{array}
\right.
\end{eqnarray}
where $h~(>0)$ is the step size and denominator functions are chosen as
\begin{eqnarray}\label{NSFD_deno_1}
\phi_1(h)=\frac{b_y\left\{1-exp{\left(-\frac{\beta Ku_y}{b_y}h\right)}\right\}}{\beta Ku_y}\mbox{,}
~~~~~~\phi_2(h)=h\mbox{.}
\end{eqnarray}
Note that $\phi_i(h)$, $i=1$,$2$, are positive for all $h>0$.\\
By these transformations, the continuous system \eqref{simple_cont_1} is converted to 
\begin{eqnarray}\label{discrete_system_1}\begin{split}
\frac{X_{n+1}-X_n}{\phi_1(h)} & = b_xX_n-\frac{b_x}{K}X_nX_{n+1}-\frac{b_x}{K}X_{n+1}Y_n-u_xX_{n+1}-\beta X_{n+1}Y_n+eY_n\\
&~~~-\frac{e}{K}X_{n+1}Y_n-\frac{e}{K}\frac{X_{n+1}Y_n^2}{X_n}\mbox{,}\\
\frac{Y_{n+1}-Y_n}{\phi_2(h)} & = b_yY_n-\frac{b_y}{K}X_nY_{n+1}-\frac{b_y}{K}Y_nY_{n+1}-u_yY_{n+1}+\beta X_nY_n\mbox{.}
\end{split}
\end{eqnarray}
System \eqref{discrete_system_1} can be rearranged to obtain
\begin{eqnarray}\label{NSFD_1}
X_{n+1} & = & \frac{X_n(1+\phi_1(h)b_x)+\phi_1(h)eY_n}{1+\phi_1(h)\left(\frac{b_x}{K}X_n+\frac{b_x}{K}Y_n+u_x+\beta Y_n+\frac{e}{K}Y_n+\frac{e}{K}\frac{Y_n^2}{X_n}\right)}\mbox{,}\nonumber\\
Y_{n+1} & =& \frac{Y_n\{1+\phi_2(h)(b_y+\beta X_n)\}}{1+\phi_2(h)\left(\frac{b_y}{K}X_n+\frac{b_y}{K}Y_n+u_y\right)}\mbox{,}
\end{eqnarray}
where $\phi_1(h)$ and $\phi_2(h)$ are given in \eqref{NSFD_deno_1}.\\
The model \eqref{NSFD_1} is our required discrete model corresponding to the continuous model \eqref{continuous_1}. It is to be noted that all terms in the right hand side of \eqref{NSFD_1} are positive, so solutions of the system \eqref{NSFD_1} will remain positive if they start with positive initial value. Therefore, the system \eqref{NSFD_1} is said to be positive \cite{M94}.\\
The fixed points of \eqref{NSFD_1} can be calculated by setting $X_{n+1}=X_n=X$ and $Y_{n+1}=Y_n=Y$. One thus get the fixed points as $E_0=(0,0)$, $E_1=(\bar{X},0)$, where $\bar{X}=K\left(1-\frac{u_x}{b_x}\right)$ and $E^*=(X^*,Y^*)$. Note that the equilibrium values and the existence conditions remain same as in the continuous system.\\
%
The variational matrix of system \eqref{NSFD_1} evaluated at an arbitrary fixed point $(X,Y)$ is given by
\begin{eqnarray}\label{NSFD_jacobian_1}
J(X,Y)=
\left(
\begin{array}{cc}
a_{11}~ & ~a_{12}\\
a_{21}~ & ~a_{22}
\end{array}
\right)\mbox{,}
\end{eqnarray}
where
\begin{eqnarray}\label{NSFD_jac_element_1}
\left\{
\begin{array}{ll}
a_{11} &= \frac{1+\phi_1(h)b_x}{1+\phi_1(h)\left(\frac{b_x}{K}X+\frac{b_x}{K}Y+u_x+\beta Y+\frac{e}{K}Y+\frac{e}{K}\frac{Y^2}{X}\right)}-\frac{\{X(1+\phi_1(h)b_x)+\phi_1(h)eY\}\phi_1(h)\left(\frac{b_x}{K}-\frac{e}{K}\frac{Y^2}{X^2}\right)}{\left\{1+\phi_1(h)\left(\frac{b_x}{K}X+\frac{b_x}{K}Y+u_x+\beta Y+\frac{e}{K}Y+\frac{e}{K}\frac{Y^2}{X}\right)\right\}^2}\mbox{,}\\
a_{12} & = \frac{\phi_1(h)e}{1+\phi_1(h)\left(\frac{b_x}{K}X+\frac{b_x}{K}Y+u_x+\beta Y+\frac{e}{K}Y+\frac{e}{K}\frac{Y^2}{X}\right)}-\frac{\{X(1+\phi_1(h)b_x)+\phi_1(h)eY\}\phi_1(h)\left(\frac{b_x}{K}+\beta+\frac{e}{K}+\frac{2e}{K}\frac{Y}{X}\right)}{\left\{1+\phi_1(h)\left(\frac{b_x}{K}X+\frac{b_x}{K}Y+u_x+\beta Y+\frac{e}{K}Y+\frac{e}{K}\frac{Y^2}{X}\right)\right\}^2}\mbox{,}\\
a_{21} & = \frac{\phi_2(h)\beta Y}{1+\phi_2(h)\left(\frac{b_y}{K}X+\frac{b_y}{K}Y+u_y\right)}-\frac{Y\{1+\phi_2(h)(b_y+\beta X)\}\phi_2(h)\frac{b_y}{K}}{\left\{1+\phi_2(h)\left(\frac{b_y}{K}X+\frac{b_y}{K}Y+u_y\right)\right\}^2}\mbox{,}\\
a_{22} & = \frac{1+\phi_2(h)(b_y+\beta X)}{1+\phi_2(h)\left(\frac{b_y}{K}X+\frac{b_y}{K}Y+u_y\right)}-\frac{Y\{1+\phi_2(h)(b_y+\beta X)\}\phi_2(h)\frac{b_y}{K}}{\left\{1+\phi_2(h)\left(\frac{b_y}{K}X+\frac{b_y}{K}Y+u_y\right)\right\}^2}\mbox{.}\nonumber
\end{array}
\right.
\end{eqnarray}
Let $\lambda_{1}$ and $\lambda_{2}$ be the eigenvalues of the variational matrix \eqref{NSFD_jacobian_1} and we have the following definition \cite{E07} in relation to the stability of the system \eqref{NSFD_1}.
\begin{definition}\label{definition6}
	A fixed point $(x,y)$ of the system \eqref{NSFD_1} is called stable if $|\lambda_{1}|<1$, $|\lambda_{2}|<1$ and a source if $|\lambda_{1}|>1$, $|\lambda_{2}|>1$. It is called a saddle if $|\lambda_{1}|<1$, $|\lambda_{2}|>1$ or $|\lambda_{1}|>1$, $|\lambda_{2}|<1$ and a nonhyperbolic fixed point if either $|\lambda_{1}|=1$ or $|\lambda_{2}|=1$.
\end{definition}
\begin{lemma}[See \cite{ADS13,E07}]\label{lemma1}
	Let $\lambda_{1}$ and $\lambda_{2}$ be the eigenvalues of the variational matrix \eqref{NSFD_jacobian_1}.
	Then $|\lambda_{1}|<1$ and $|\lambda_{2}|<1$ iff
	$(i)~ 1-det(J)>0$, $(ii)~ 1-trace(J)+det(J)>0$ and $(iii)~ 0<a_{11}<1$, $0<a_{22}<1$.
\end{lemma}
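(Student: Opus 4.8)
The plan is to recognise the statement as a repackaging of the classical Schur--Cohn (Jury) criterion for a quadratic and then to match the listed inequalities against that criterion. First I would write down the characteristic polynomial of the $2\times2$ matrix \eqref{NSFD_jacobian_1} as $p(\lambda)=\lambda^{2}-\mathrm{trace}(J)\,\lambda+\det(J)$, and recall from \cite{E07} the standard fact that both roots of $p$ lie in the open unit disc if and only if the four inequalities $p(1)>0$, $p(-1)>0$ and $-1<\det(J)<1$ hold simultaneously, where $p(1)=1-\mathrm{trace}(J)+\det(J)$ and $p(-1)=1+\mathrm{trace}(J)+\det(J)$. The whole argument then reduces to reconciling these four inequalities with conditions (i)--(iii).

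For the sufficiency direction I would observe that (i) is literally $\det(J)<1$ and (ii) is literally $p(1)>0$, so only $\det(J)>-1$ and $p(-1)>0$ remain to be produced. This is exactly where (iii) does the work: since $0<a_{11}<1$ and $0<a_{22}<1$, the trace $\mathrm{trace}(J)=a_{11}+a_{22}$ satisfies $0<\mathrm{trace}(J)<2$. Rewriting (ii) as $\det(J)>\mathrm{trace}(J)-1$ and using $\mathrm{trace}(J)-1>-1$ yields $\det(J)>-1$; substituting the same inequality into $p(-1)$ gives $p(-1)=1+\mathrm{trace}(J)+\det(J)>2\,\mathrm{trace}(J)>0$. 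Hence (i)--(iii) force all four Jury inequalities, and both eigenvalues lie inside the unit disc.

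For the necessity direction, conditions (i) and (ii) follow at once from $|\lambda_{1}|,|\lambda_{2}|<1$ through the identities $\det(J)=\lambda_{1}\lambda_{2}$ and $p(1)=(1-\lambda_{1})(1-\lambda_{2})$; the latter is a product of two positive factors when the roots are real and below $1$, and equals $|1-\lambda_{1}|^{2}>0$ for a complex-conjugate pair, so in either case $p(1)>0$.

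The step I expect to be the main obstacle is the necessity of condition (iii). The location of the eigenvalues does not by itself determine the individual diagonal entries $a_{11}$ and $a_{22}$, so (iii) cannot be recovered from $|\lambda_{i}|<1$ by linear algebra alone; it must instead be read off from the specific form of \eqref{NSFD_jacobian_1}. I would therefore verify directly that, at each admissible fixed point and for every $h>0$, the entries $a_{11}$ and $a_{22}$ built from the nonlocal approximation \eqref{NSFD_approx_1} with the positive denominator functions \eqref{NSFD_deno_1} remain in $(0,1)$, so that (iii) becomes an intrinsic feature of the scheme and is trivially necessary. Carrying out this boundedness estimate cleanly --- in particular at the interior fixed point $E^{*}$, where the off-diagonal entries do not vanish --- and confirming that it is uniform in the step size, is the delicate part of the argument.
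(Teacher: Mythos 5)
The paper does not actually prove this lemma---it is quoted from \cite{ADS13,E07} and used as a black box---so there is no internal proof to compare against; what can be judged is whether your argument is sound and whether it matches how the lemma is deployed. Your sufficiency direction is correct and complete: from (iii) you get $0<\mathrm{trace}(J)<2$, from (ii) you get $\det(J)>\mathrm{trace}(J)-1>-1$, which together with (i) and $p(-1)=1+\mathrm{trace}(J)+\det(J)>2\,\mathrm{trace}(J)>0$ yields all of the Schur--Cohn inequalities, hence $|\lambda_{1}|,|\lambda_{2}|<1$. This is the only direction the paper ever uses (in Theorem \ref{NSFD_theorem_1} it checks (i)--(iii) and concludes stability), so your argument fully supports the way the lemma functions in the paper.

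You are also right to flag the necessity of (iii) as the problematic step, and it is worth being blunter than you were: as a statement about an arbitrary $2\times 2$ matrix the ``iff'' is simply false, since e.g.\ $J=\mathrm{diag}(-\tfrac12,\tfrac12)$ has both eigenvalues in the open unit disc but $a_{11}<0$. So condition (iii) cannot be \emph{derived} from the eigenvalue hypothesis; it can only be \emph{verified} to hold for the particular Jacobians produced by the scheme \eqref{NSFD_approx_1}--\eqref{NSFD_deno_1}, after which the equivalence of the eigenvalue condition with (i)--(ii) alone is what remains to prove. That is exactly the route the paper takes implicitly: in the proof of Theorem \ref{NSFD_theorem_1} it asserts $0<a_{11}^{*}<1$ and $0<a_{22}^{*}<1$ directly from the explicit formulas at each fixed point (using positivity of $\phi_{1},\phi_{2}$ and the existence conditions), rather than inferring them from the spectrum. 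Your plan to carry out that boundedness check at $E_{0}$, $E_{1}$ and $E^{*}$ is therefore the right completion, but until it is done the ``only if'' half of the lemma as literally stated remains unproved (and, read as a general matrix statement, unprovable).
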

One can then prove the following theorem.
\begin{theorem}\label{NSFD_theorem_1}
	System \eqref{NSFD_1} is locally asymptotically stable around the fixed point
	\begin{itemize}
		\item[(i)] $E_0$ if $b_x<u_x$ and $b_y<u_y$.
		\item[(ii)] $E_1$ if $b_x>u_x$ and $R_0<1$, where $R_0=V_0+H_0$ with $V_0=\frac{b_y}{b_x}\frac{u_x}{u_y}$, $H_0=\frac{\beta}{u_y}K\left(1-\frac{u_x}{b_x}\right)$ and it is unstable whenever $R_0>1$.
		\item[(iii)] $E^*$ if $b_x>u_x$, $b_y>u_y$, $b_y>\beta K$ and $\frac{K}{X^*}>\frac{b_y-\beta K}{b_y-u_y}$.
	\end{itemize}
\end{theorem}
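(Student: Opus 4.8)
The plan is to verify, at each fixed point, the three conditions of Lemma~\ref{lemma1} for the discrete variational matrix \eqref{NSFD_jacobian_1}, and to do so by exploiting a clean correspondence with the continuous variational matrix \eqref{cont_variational_1}. Writing the map \eqref{NSFD_1} as $X_{n+1}=N_1/D_1$, $Y_{n+1}=N_2/D_2$ with $D_1,D_2>0$ the (strictly positive) denominators, differentiating the rational expressions, and using the fixed-point identities $N_1=XD_1$, $N_2=YD_2$, I would first establish the four relations
\[
1-a_{11}=\frac{-\phi_1(h)\,a_{11}^{c}}{D_1},\quad 1-a_{22}=\frac{-\phi_2(h)\,a_{22}^{c}}{D_2},\quad a_{12}=\frac{\phi_1(h)\,a_{12}^{c}}{D_1},\quad a_{21}=\frac{\phi_2(h)\,a_{21}^{c}}{D_2},
\]
where $a_{ij}^{c}$ are the entries of the continuous matrix \eqref{cont_vari_element_1}, after simplification through the equilibrium relations (which at $E^*$ read $b_xX^*+eY^*=X^*S_1$ and $S_2=b_y+\beta X^*$, $S_1,S_2$ being the bracketed sums inside $D_1,D_2$). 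These identities are the engine of the whole argument and make the step-size dependence completely transparent.

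For the boundary points $E_0$ and $E_1$ the second coordinate vanishes, so $a_{21}=0$ and the matrix is triangular; the eigenvalues are the diagonal entries. At $E_0$ they are $\tfrac{1+\phi_1(h)b_x}{1+\phi_1(h)u_x}$ and $\tfrac{1+\phi_2(h)b_y}{1+\phi_2(h)u_y}$, each lying in $(0,1)$ precisely when $b_x<u_x$ and $b_y<u_y$, which gives (i). At $E_1$ the first eigenvalue reduces to $a_{11}(E_1)=\tfrac{1+\phi_1(h)u_x}{1+\phi_1(h)b_x}\in(0,1)$ whenever $E_1$ exists, while the second is $a_{22}(E_1)>0$, and the relation for $1-a_{22}$ shows $a_{22}(E_1)<1$ iff $a_{22}^{c}(E_1)=\tfrac{b_yu_x}{b_x}-u_y+\beta K\big(1-\tfrac{u_x}{b_x}\big)<0$, i.e.\ iff $R_0<1$; when $R_0>1$ this eigenvalue exceeds $1$ and $E_1$ is unstable. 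In both cases the sign of each condition is manifestly independent of $h$.

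The interior point $E^*$ is the substantive case, and I would run the three conditions in turn. Condition (iii) is immediate: since $a_{11}^{c},a_{22}^{c}<0$ at $E^*$ (shown in the proof of Theorem~\ref{cont_theorem_1}) and $D_1,D_2>0$, the first two identities give $a_{11},a_{22}<1$, while positivity of the numerators of $a_{11},a_{22}$ gives $a_{11},a_{22}>0$. Condition (ii) is the cleanest, and I expect it to fall out directly: a short computation with the four identities yields
\[
1-\mathrm{trace}(J)+\det(J)=\det(I-J)=\frac{\phi_1(h)\phi_2(h)}{D_1D_2}\,\det\!\big(V(E^*)\big),
\]
which is positive because $\det\big(V(E^*)\big)>0$ under the existence hypotheses together with the standing assumption $b_x\ge b_y+e$ (again from Theorem~\ref{cont_theorem_1}).

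The hard part will be condition (i), $1-\det(J)>0$. Using $1-\det(J)=(1-a_{11})+(1-a_{22})-\det(I-J)$ and clearing the positive factor $D_1D_2$, this becomes
\[
(1-\det J)\,D_1D_2=-\phi_1(h)\,a_{11}^{c}-\phi_2(h)\,a_{22}^{c}-\phi_1(h)\phi_2(h)\,M,\qquad M:=a_{11}^{c}S_2+a_{22}^{c}S_1+\det\!\big(V(E^*)\big).
\]
The two linear-in-$\phi$ terms are already positive since $a_{11}^{c},a_{22}^{c}<0$; the obstacle is the quadratic term, whose coefficient $M$ can be negative because the cross product $a_{12}a_{21}$ need not be positive when $e\neq0$ (indeed $a_{21}^{c}=Y^*(\beta-b_y/K)<0$ by $b_y>\beta K$, while $a_{12}^{c}$ may be positive). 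The crux is therefore to prove $M\le 0$: substituting $S_1=b_x+\tfrac{eY^*}{X^*}$, $S_2=b_y+\beta X^*$ and the expressions \eqref{cont_vari_int_ele_1} for $a_{11}^{c},a_{22}^{c}$ and for $\det\big(V(E^*)\big)$, the claim reduces to an inequality among the equilibrium quantities that I expect to close using $b_x\ge b_y+e$ and $b_y>\beta K$. Once $M\le0$ is secured the right-hand side is positive for \emph{every} $h>0$, which simultaneously proves (i) and exhibits the advertised independence of the step size; combining (i)--(iii) through Lemma~\ref{lemma1} then yields local asymptotic stability of $E^*$ under exactly the stated conditions.
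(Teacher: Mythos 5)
Your skeleton is the same as the paper's: apply Lemma~\ref{lemma1} at each fixed point, exploit the triangular structure at $E_0$ and $E_1$ (your eigenvalues and the rewriting of the second one in terms of $R_0$ are exactly the paper's), and at $E^*$ verify $0<a_{11},a_{22}<1$ plus the two Jury-type conditions. Your four bridging identities are correct and are implicitly present in the paper's explicit entries (the paper's $G$ and $H$ are $X^*D_1$ and $D_2$), and your formula $1-\mathrm{trace}(J)+\det(J)=\phi_1(h)\phi_2(h)\det\!\big(V(E^*)\big)/(D_1D_2)$ agrees with the paper's displayed expression once the equilibrium relation $b_y\big(1-\frac{X^*+Y^*}{K}\big)+\beta X^*=u_y$ is used. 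The genuine gap is exactly where you flag it: the condition $1-\det(J)>0$ is reduced to the $h$-independent inequality $M\le 0$, and you only say you ``expect to close'' it. As written that is not a proof, and it is the only nontrivial step of part (iii). Note also that the paper does \emph{not} prove $M\le0$; it groups the terms of $1-\det(J(E^*))$ so that all pieces are manifestly positive except $\frac{\phi_2(h) b_yX^*Y^*}{KGH}\big(1-\frac{\phi_1(h)\beta Ku_y}{b_y}\big)$, whose sign is settled by the bound $\phi_1(h)<\frac{b_y}{\beta Ku_y}$ coming from the specific exponential denominator function \eqref{NSFD_deno_1}. So the two routes really differ at this point: the paper leans on a structural property of the NSFD denominator, whereas your route, if completed, yields $1-\det(J)>0$ for \emph{any} positive denominator functions, which is a stronger and cleaner statement.

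The claim $M\le0$ is in fact true, but the ingredients you name ($b_x\ge b_y+e$ and $b_y>\beta K$) do not suffice by themselves; the essential extra input is the equilibrium identity $b_y\big(1-\frac{X^*+Y^*}{K}\big)+\beta X^*=u_y$ together with $X^*+Y^*<K$, $b_y>u_y$ and $b_y>\beta K$. Substituting $S_1=b_x+\frac{eY^*}{X^*}$, $S_2=b_y+\beta X^*$ and the entries \eqref{cont_vari_int_ele_1}, and eliminating $u_y-\beta X^*$ via the identity above, one obtains the decomposition
\begin{equation*}
-M=\frac{eu_yY^*(K-X^*)}{KX^*}+\frac{e{Y^*}^2(b_y-u_y)}{KX^*}+\frac{eY^*(b_y-\beta Y^*)}{K}+e\beta Y^*\Big(1-\frac{X^*+Y^*}{K}\Big)+\frac{b_xX^*(b_y+\beta X^*)}{K}+\frac{Y^*}{K}\Big\{b_x(b_y-\beta X^*)+\beta X^*(b_y-\beta K)+e\beta X^*\Big\}\mbox{,}
\end{equation*}
every summand of which is positive under the existence conditions (using $X^*<K$, $Y^*<K$ and $\beta X^*<\beta K<b_y$). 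With this display inserted, your argument closes and all three parts of the theorem follow; without it, the proof of part (iii) is incomplete precisely at its crux.
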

\begin{proof}
	At the fixed point $E_0$, the variational matrix is given by
	\begin{eqnarray}
	J(E_0)=
	\begin{pmatrix}
	\frac{1+\phi_1(h)b_x}{1+\phi_1(h)u_x} & \frac{\phi_1(h)e}{1+\phi_1(h)u_x}\\
	& \\
	0 & \frac{1+\phi_2(h)b_y}{1+\phi_2(h)u_y}\nonumber
	\end{pmatrix}\mbox{.}
	\end{eqnarray}
	The corresponding eigenvalues are $\lambda_1=\frac{1+\phi_1(h)b_x}{1+\phi_1(h)u_x}$ and $\lambda_2=\frac{1+\phi_2(h)b_y}{1+\phi_2(h)u_y}$. Clearly $|\lambda_1|<1$ if $b_x<u_x$ and $|\lambda_2|<1$ if $b_y<u_y$, for $h>0$. Therefore, $E_0$ will be stable if $b_x<u_x$ and $b_y<u_y$ hold simultaneously.\\
	One can similarly compute the eigenvalues corresponding to the fixed point $E_1$ as $\lambda_1=\frac{1+\phi_1(h)u_x}{1+\phi_1(h)b_x}$ and $\lambda_2=\frac{1+\phi_2(h)\left\{b_y+\beta K\left(1-\frac{u_x}{b_x}\right)\right\}}{1+\phi_2(h)\left(b_y-\frac{b_yu_x}{b_x}+u_y\right)}$. Note that $|\lambda_1|<1$ whenever $E_1$ exists and $|\lambda_2|<1$ whenever $R_0<1$. Thus, $E_1$ is stable if $b_x>u_x$ and $R_0<1$.\\
	At the interior fixed point $E^*$, the variational matrix is given by
	\begin{eqnarray}
	J(E^*)=
	\begin{pmatrix}
	a_{11}^* & a_{12}^*\\
	a_{21}^* &  a_{22}^*\nonumber
	\end{pmatrix}\mbox{,}
	\end{eqnarray}
	where
	\begin{eqnarray}
	\left\{
	\begin{array}{ll}
	a_{11}^* & = 1-\frac{X^*\phi_1(h)}{G}\left\{\frac{b_xX^*}{K}+\frac{eY^*}{X^*}\left(1-\frac{X^*+Y^*}{K}\right)+\frac{eY^*}{K}\right\}\mbox{,}\\
	a_{12}^* & = \frac{\phi_1(h)X^*}{G}\left\{e\left(1-\frac{X^*+Y^*}{K}\right)-\frac{b_xX^*}{K}-\beta X^*-\frac{eY^*}{K}\right\}\mbox{,}\\
	a_{21}^* & = \frac{\phi_2(h)Y^*\beta K}{KH}-\frac{\phi_2(h)Y^*b_y}{KH}\mbox{,}\\
	a_{22}^* & = 1-\frac{\phi_2(h)b_yY^*}{KH}\mbox{,}\nonumber
	\end{array}
	\right.
	\end{eqnarray}
	with $G=X^*(1+\phi_1(h)b_x)+\phi_1(h)eY^*$ and $H=1+\phi_2(h)(b_y+\beta X^*)$.\\
	One can easily verify that $0<a_{11}^*<1$ and $0<a_{22}^*<1$. On simplifications, one can show
	\begin{eqnarray}\begin{split}
	1-det(J(E^*)) & = \frac{\phi_1(h)X^*}{KGH}\left\{b_xX^*+\frac{eY^*K}{X^*}\left(1-\frac{X^*+Y^*}{K}\right)+eY^*\right\}+\frac{\phi_1(h)\phi_2(h)X^*Y^*b_y}{KGH}\left\{\frac{b_xX^*}{Y^*}\right.\\
	&~~~\left.+\frac{eK}{X^*}\left(1-\frac{X^*+Y^*}{K}\right)^2+e+\frac{\beta b_x{X^*}^2}{b_yY^*}+\frac{2e\beta K}{b_y}\left(1-\frac{X^*+Y^*}{K}\right)+\frac{e\beta X^*}{b_y}\right.\\
	&~~~\left.+b_x\left(1-\frac{\beta X^*}{b_y}\right)+\frac{eY^*}{X^*}\left(1-\frac{\beta X^*}{b_y}\right)+\beta X^*+\beta K\left(1-\frac{X^*+Y^*}{K}\right)\right\}\\
	&~~~+\frac{\phi_2(h)b_yX^*Y^*}{KGH}\left(1-\frac{\phi_1(h)\beta Ku_y}{b_y}\right)\mbox{,}\nonumber
	\end{split}
	\end{eqnarray}
	\begin{eqnarray}\begin{split}
	1-trace(J(E^*))+det(J(E^*)) & = \frac{\phi_1(h)\phi_2(h)X^*Y^*b_y}{KGH}\left\{\frac{eK}{X^*}\left(1-\frac{X^*+Y^*}{K}\right)\left(1-\frac{u_y}{b_y}\right)+\beta X^*\left(\frac{b_x}{b_y}-1\right)\right.\\
	&~~~\left.+\frac{\beta^2KX^*}{b_y}+\frac{e\beta Y^*}{b_y}\right\}\mbox{.}\nonumber
	\end{split}	
	\end{eqnarray}
	From the existence condition, we have $\left(1-\frac{X^*+Y^*}{K}\right)=\frac{u_xX^*+\beta X^*Y^*}{b_xX^*+eY^*}>0$. Thus, $X^*+Y^*<K$, i.e., $X^*<K$. Also, from $b_y>\beta K$, we have $b_y>\beta X^*$ and $\left(1-\frac{\beta X^*}{b_y}\right)>0$. It is easy to observe that $\phi_1(h)<\frac{b_y}{\beta Ku_y}$. Thus, $1-det(J(E^*))>0$ and $1-trace(J(E^*))+det(J(E^*))>0$. Hence $E^*$ is locally asymptotically stable whenever it exists. This completes the theorem.
\end{proof}
\begin{remark}
	It is interesting to note that the dynamic properties of the discrete system \eqref{NSFD_1} are identical with its continuous counterpart \eqref{continuous_1}. So the discrete model is dynamically consistent. The stability of the fixed points also does not depend on the step size. Since all solutions of the discrete model \eqref{NSFD_1} remain positive when starts with positive initial value, there is no possibility of numerical instabilities and the model will not show any spurious dynamics.
\end{remark}
\subsection{Discrete Model for Horizontal and Perfect Vertical Transmissions}
Here we rewrite the continuous model \eqref{continuous_2} as
\begin{eqnarray}\label{simple_cont_2}
\frac{dX}{dt} & = & b_xX-\frac{b_xX^2}{K}-\frac{b_xXY}{K}-u_xX-\beta XY\mbox{,}\nonumber\\
\frac{dY}{dt} & = & b_yY-\frac{b_yXY}{K}-\frac{b_yY^2}{K}-u_yY+\beta XY\mbox{.}
\end{eqnarray}
Now we employ the same nonlocal approximations \eqref{NSFD_approx_1} with $e=0$ term wise to have the following system: 
\begin{eqnarray}\label{discrete_system_2}
\frac{X_{n+1}-X_n}{\phi_1(h)} & = & b_xX_n-\frac{b_x}{K}X_nX_{n+1}-\frac{b_x}{K}X_{n+1}Y_n-u_xX_{n+1}-\beta X_{n+1}Y_n\mbox{,}\nonumber\\
\frac{Y_{n+1}-Y_n}{\phi_2(h)} & = & b_yY_n-\frac{b_y}{K}X_nY_{n+1}-\frac{b_y}{K}Y_nY_{n+1}-u_yY_{n+1}+\beta X_nY_n\mbox{.}
\end{eqnarray}
The required discrete model is obtained after simplification as follows:  \begin{eqnarray}\label{NSFD_2}
X_{n+1} & = & \frac{X_n(1+\phi_1(h)b_x)}{1+\phi_1(h)\left(\frac{b_x}{K}X_n+\frac{b_x}{K}Y_n+u_x+\beta Y_n\right)}\mbox{,}\nonumber\\
Y_{n+1} & = & \frac{Y_n\{1+\phi_2(h)(b_y+\beta X_n)\}}{1+\phi_2(h)\left(\frac{b_y}{K}X_n+\frac{b_y}{K}Y_n+u_y\right)}\mbox{,}
\end{eqnarray}
where $\phi_1(h)$ and $\phi_2(h)$ have the same expression as in \eqref{NSFD_deno_1}. It is worth mentioning that the discrete model \eqref{NSFD_2} is positive.\\
One can find the same four fixed points of \eqref{NSFD_2} as it were in the continuous case. The stability properties of each fixed point are presented in the following theorem.
\begin{theorem}\label{NSFD_theorem_2}
	The system \eqref{NSFD_2} is stable around the fixed point
	\begin{itemize}
		\item[(i)] $E_0^H=(0,0)$ if $b_x<u_x$ and $b_y<u_y$.
		\item[(ii)] $E_1^H=(\bar{X},0)$ if $b_x>u_x$ and $R_0<1$, where $\bar{X}=K\left(1-\frac{u_x}{b_x}\right)$ and $R_0=\frac{b_y}{b_x}\frac{u_x}{u_y}+\frac{\beta}{u_y}\bar{X}$.
		\item[(iii)] $E_2^H=(0,\bar{Y})$ if $b_y>u_y$ and $\frac{b_xu_y}{b_y}<u_x+\beta K\left(1-\frac{u_y}{b_y}\right)$, where $\bar{Y}=K\left(1-\frac{u_y}{b_y}\right)$.
		\item[(iv)] $E_H^*$ if $b_x>u_x$, $b_y>u_y$, $\frac{b_xu_y}{b_y}>u_x+\beta K\left(1-\frac{u_x}{b_x}\right)$ and $R_0>1$.
	\end{itemize}
\end{theorem}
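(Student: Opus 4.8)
The plan is to reuse the linearization argument from the proof of Theorem~\ref{NSFD_theorem_1}, now applied to the variational matrix \eqref{NSFD_jacobian_1}--\eqref{NSFD_jac_element_1} with $e=0$. At a fixed point I would first use the two equilibrium identities $\frac{b_x}{K}(X+Y)+u_x+\beta Y=b_x$ and $\frac{b_y}{K}(X+Y)+u_y=b_y+\beta X$ (obtained by setting $X_{n+1}=X_n$, $Y_{n+1}=Y_n$ in \eqref{NSFD_2}) to collapse the denominators appearing in $J$, exactly as was done at $E^*$ in Theorem~\ref{NSFD_theorem_1}. Stability then follows from Definition~\ref{definition6} at the boundary points and from Lemma~\ref{lemma1} at the interior point. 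Throughout, every diagonal entry of $J$ is a ratio of positive quantities, so $|\lambda|<1$ reduces to checking that a numerator is smaller than its denominator.

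For the three boundary fixed points one of $X$, $Y$ vanishes, so the matching off-diagonal entry ($a_{12}$ at $E_2^H$, $a_{21}$ at $E_0^H$ and $E_1^H$) is zero and $J$ is triangular; the eigenvalues are the diagonal entries. At $E_0^H$ these are $\frac{1+\phi_1(h)b_x}{1+\phi_1(h)u_x}$ and $\frac{1+\phi_2(h)b_y}{1+\phi_2(h)u_y}$, each below $1$ for all $h>0$ precisely when $b_x<u_x$ and $b_y<u_y$. At $E_1^H$ one eigenvalue is $\frac{1+\phi_1(h)u_x}{1+\phi_1(h)b_x}<1$ whenever $b_x>u_x$, and after inserting $\bar X$ the other becomes $\frac{1+\phi_2(h)\{b_y+\beta K(1-u_x/b_x)\}}{1+\phi_2(h)\{b_y-b_yu_x/b_x+u_y\}}$, which is below $1$ iff $V_0+H_0=R_0<1$, the same rearrangement as in Theorem~\ref{NSFD_theorem_1}(ii). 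At $E_2^H$ one eigenvalue is $\frac{1+\phi_2(h)u_y}{1+\phi_2(h)b_y}<1$ whenever $b_y>u_y$, while the other, $\frac{1+\phi_1(h)b_x}{1+\phi_1(h)\{b_x-b_xu_y/b_y+u_x+\beta K(1-u_y/b_y)\}}$, is below $1$ iff $\frac{b_xu_y}{b_y}<u_x+\beta K(1-u_y/b_y)$. This yields parts (i)--(iii) and is routine.

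For the interior fixed point $E_H^*=(X^*,Y^*)$ I would apply Lemma~\ref{lemma1}. The collapsed denominators give $a_{11}^*=1-\alpha b_x$, $a_{22}^*=1-\gamma b_y$, $a_{12}^*=-\alpha(b_x+\beta K)$ and $a_{21}^*=\gamma(\beta K-b_y)$, where $\alpha=\frac{\phi_1(h)X^*}{K(1+\phi_1(h)b_x)}>0$ and $\gamma=\frac{\phi_2(h)Y^*}{K(1+\phi_2(h)(b_y+\beta X^*))}>0$. Since $1-\frac{X^*+Y^*}{K}=\frac{u_x+\beta Y^*}{b_x}>0$ forces $X^*,Y^*<K$, one checks at once that $0<a_{11}^*<1$ and $0<a_{22}^*<1$, which is condition (iii). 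A short computation then gives $1-\mathrm{trace}(J)+\det(J)=\alpha\gamma\beta K(b_x+\beta K-b_y)$, which is positive because the standing assumption $b_x\ge b_y+e$ reduces to $b_x\ge b_y$ here; this settles condition (ii).

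The remaining condition, $1-\det(J)>0$, is the main obstacle: one finds $1-\det(J)=\alpha b_x+\gamma b_y-\alpha\gamma\beta K(b_x+\beta K-b_y)$, whose last term is negative once $b_x+\beta K>b_y$, so positivity is not visible from the bounds on $a_{11}^*,a_{22}^*$ alone. The plan is to clear denominators and write $K(1+\phi_1(h)b_x)(1+\phi_2(h)(b_y+\beta X^*))\,(1-\det(J))=\phi_1(h)b_xX^*+\phi_2(h)b_yY^*+\phi_1(h)\phi_2(h)B$, where $B=b_xb_y(X^*+Y^*)+\beta b_xX^*(X^*-Y^*)+\beta X^*Y^*(b_y-\beta K)$, and then to eliminate $b_y(X^*+Y^*)$ and $(b_x+\beta K)Y^*$ using the two equilibrium identities. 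This collapses $B$ to $b_xK(b_y-u_y)+\beta Ku_xX^*+2\beta b_x{X^*}^2+\beta b_yX^*Y^*$, every term of which is positive once $b_y>u_y$; hence $B>0$ and $1-\det(J)>0$. All three conditions of Lemma~\ref{lemma1} then hold whenever $E_H^*$ exists, giving part (iv). It is worth noting that the denominator function satisfies $\phi_1(h)<b_y/(\beta Ku_y)$ for all $h>0$ and is available if a cruder estimate is preferred, but the equilibrium substitution makes it unnecessary, and in particular the conclusion is again independent of the step size $h$.
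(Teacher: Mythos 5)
Your proof is correct, and it follows the same overall strategy the paper uses for Theorem~\ref{NSFD_theorem_1} (the paper states Theorem~\ref{NSFD_theorem_2} without proof, clearly intending the reader to repeat that argument): exploit the equilibrium identities to collapse the denominators of the Jacobian, read off eigenvalues from the triangular matrices at the boundary fixed points, and invoke Lemma~\ref{lemma1} at $E_H^*$. I checked the key computations: the entries $a_{11}^*=1-\alpha b_x$, $a_{22}^*=1-\gamma b_y$, $a_{12}^*=-\alpha(b_x+\beta K)$, $a_{21}^*=\gamma(\beta K-b_y)$ are right; the identity $1-\mathrm{trace}(J)+\det(J)=\alpha\gamma\beta K(b_x+\beta K-b_y)$ is right; and the reduction of $B$ to $b_xK(b_y-u_y)+\beta Ku_xX^*+2\beta b_x{X^*}^2+\beta b_yX^*Y^*$ via the substitutions $b_y(X^*+Y^*)=K(b_y-u_y)+\beta KX^*$ and $(b_x+\beta K)Y^*=K(b_x-u_x)-b_xX^*$ checks out, so $1-\det(J)>0$ follows from $b_y>u_y$ alone. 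Where you genuinely depart from the paper is in this last step: the paper's treatment of the interior point in Theorem~\ref{NSFD_theorem_1} leaves $1-\det(J)$ as a long sum whose positivity needs both the hypothesis $b_y>\beta K$ (absent from Theorem~\ref{NSFD_theorem_2}) and the scheme-specific bound $\phi_1(h)<b_y/(\beta Ku_y)$; a naive specialization to $e=0$ would therefore require the extra observation that $u_y>\beta X^*$ follows from the equilibrium identity. Your version sidesteps both issues, yields an exactly positive closed form, and makes the step-size independence manifest; the cost is only a slightly longer elimination. One small remark: condition (iv) as printed contains $1-\frac{u_x}{b_x}$ where the continuous counterpart (Theorem~\ref{cont_theorem_2}(iv)) has $1-\frac{u_y}{b_y}$; your argument implicitly uses the latter (correct) form, since it is what makes the numerator of $X_H^*$ positive.
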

\subsection{Discrete Model for Perfect Vertical and no Horizontal Transmission}
For convenience, we first express the continuous system \eqref{continuous_3} as
\begin{eqnarray}\label{simple_cont_3}
\frac{dX}{dt} & = & b_xX-\frac{b_xX^2}{K}-\frac{b_xXY}{K}-u_xX\mbox{,}\\
\frac{dY}{dt} & = & b_yY-\frac{b_yXY}{K}-\frac{b_yY^2}{K}-u_yY\mbox{.}\nonumber
\end{eqnarray}
In this case, we consider the nonlocal approximations \eqref{NSFD_approx_1} with $e=0$, $\beta=0$. Note that here $\phi_1(h)\rightarrow h$ when $\beta \rightarrow 0$. Then the system \eqref{simple_cont_3} reads
\begin{eqnarray}\label{discrete_system_3}
\frac{X_{n+1}-X_n}{h} & = & b_xX_n-\frac{b_x}{K}X_nX_{n+1}-\frac{b_x}{K}X_{n+1}{Y_n}-u_xX_{n+1}\mbox{,}\\
\frac{Y_{n+1}-Y_n}{h} & = & b_yY_n-\frac{b_y}{K}X_nY_{n+1}-\frac{b_y}{K}Y_{n}Y_{n+1}-u_yY_{n+1}\mbox{.}\nonumber
\end{eqnarray}
On simplifications, we obtain our desired discrete model as
\begin{eqnarray}\label{NSFD_3}
X_{n+1} & = & \frac{X_n(1+hb_x)}{1+h\left(\frac{b_x}{K}X_n+\frac{b_x}{K}Y_n+u_x\right)}\mbox{,}\\
Y_{n+1} & = & \frac{Y_n(1+hb_y)}{1+h\left(\frac{b_y}{K}X_n+\frac{b_y}{K}Y_n+u_y\right)}\mbox{.}\nonumber
\end{eqnarray}
This system also does not contain any negative terms, so solutions remain positive for all step size as long as initial values are positive.

As in the continuous system \eqref{continuous_3}, the discrete system \eqref{NSFD_3} has same three fixed points. The stability of each fixed point can be proved similarly and has been summarized in the following theorem.
\begin{theorem}\label{NSFD_theorem_3}
	The system \eqref{NSFD_3} is stable around the fixed point
	\begin{itemize}
		\item[(i)] $E_0^V=(0,0)$ if $b_x<u_x$ and $b_y<u_y$.
		\item[(ii)] $E_1^V=(\bar{X},0)$ if $b_x>u_x$ and $\frac{b_y}{u_y}<\frac{b_x}{u_x}$.
		\item[(iii)] The fixed point $E_2^V=(0,\bar{Y})$ is always unstable.
	\end{itemize}
\end{theorem}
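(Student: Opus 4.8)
The plan is to follow the linearization approach of Theorem~\ref{NSFD_theorem_1}, which is considerably simpler here because setting $e=0$ and $\beta=0$ almost decouples the map \eqref{NSFD_3}. First I would compute the variational matrix of \eqref{NSFD_3} at a generic point. Writing $D_1=1+h(\frac{b_x}{K}X+\frac{b_x}{K}Y+u_x)$ and $D_2=1+h(\frac{b_y}{K}X+\frac{b_y}{K}Y+u_y)$, differentiation gives the diagonal entries
\[
\frac{\partial X_{n+1}}{\partial X_n}=\frac{(1+hb_x)\{1+h(\tfrac{b_x}{K}Y+u_x)\}}{D_1^2},\qquad
\frac{\partial Y_{n+1}}{\partial Y_n}=\frac{(1+hb_y)\{1+h(\tfrac{b_y}{K}X+u_y)\}}{D_2^2},
\]
while $\partial X_{n+1}/\partial Y_n$ carries a factor $X$ and $\partial Y_{n+1}/\partial X_n$ carries a factor $Y$.

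Next I would evaluate the matrix at each fixed point. Since each off-diagonal entry vanishes whenever its corresponding coordinate is zero, the Jacobian is triangular at all three boundary fixed points, so its eigenvalues are just the diagonal entries and the trace--determinant test of Lemma~\ref{lemma1} is not needed. At $E_0^V$ the eigenvalues are $\frac{1+hb_x}{1+hu_x}$ and $\frac{1+hb_y}{1+hu_y}$, both lying in $(0,1)$ exactly when $b_x<u_x$ and $b_y<u_y$, giving (i). At $E_1^V$, using $\frac{b_x}{K}\bar X=b_x-u_x$ and $\frac{b_y}{K}\bar X=b_y-\frac{b_yu_x}{b_x}$, the eigenvalues simplify to $\frac{1+hu_x}{1+hb_x}$, which is below $1$ by the existence condition $b_x>u_x$, and $\frac{1+hb_y}{1+h(b_y-\frac{b_yu_x}{b_x}+u_y)}$, which is below $1$ iff $\frac{b_yu_x}{b_x}<u_y$, that is $\frac{b_y}{u_y}<\frac{b_x}{u_x}$; this is (ii).

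The only genuinely delicate point is part (iii). At $E_2^V=(0,\bar Y)$ the eigenvalues are $\frac{1+hu_y}{1+hb_y}$, which is below $1$ by the existence condition $b_y>u_y$, and $\lambda=\frac{1+hb_x}{1+h(b_x-\frac{b_xu_y}{b_y}+u_x)}$. The hard part will be to argue that $\lambda>1$ for every $h>0$, equivalently $\frac{b_xu_y}{b_y}>u_x$, so that $E_2^V$ cannot be stabilized by any parameter choice. I would handle this by invoking the standing biological hypotheses: with $e=0$ the assumption $b_x\geq b_y+e$ reads $b_x\geq b_y$, whence $\frac{b_xu_y}{b_y}\geq u_y$, and combined with $u_y>u_x$ this yields $\frac{b_xu_y}{b_y}\geq u_y>u_x$ unconditionally. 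Hence $\lambda>1$ irrespective of the remaining parameters, and since the companion eigenvalue is below $1$, the point $E_2^V$ is always a saddle, hence unstable. Finally, because every eigenvalue has the M\"obius form $\frac{1+h\alpha}{1+h\gamma}$ with $\alpha,\gamma>0$, the condition $|\lambda|<1$ is equivalent to $\alpha<\gamma$ for all $h>0$; thus each stability criterion reduces to an inequality among the rates alone and is independent of the step size, exactly matching Theorem~\ref{cont_theorem_3}.
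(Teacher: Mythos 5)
Your proposal is correct and follows exactly the route the paper intends: the paper omits the proof of this theorem, stating only that it "can be proved similarly" to Theorem~\ref{NSFD_theorem_1}, i.e.\ by linearizing \eqref{NSFD_3} at each fixed point, and your computation of the (triangular) Jacobians and the resulting eigenvalue conditions is accurate, including the observation that the "always unstable" claim for $E_2^V$ rests on the standing hypotheses $b_x\geq b_y+e$ and $u_y>u_x$ from the introduction.
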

\section{Numerical Simulations}
\begin{figure}[H]
	\begin{center}
		\includegraphics[height=4.2in, width=5.5in]{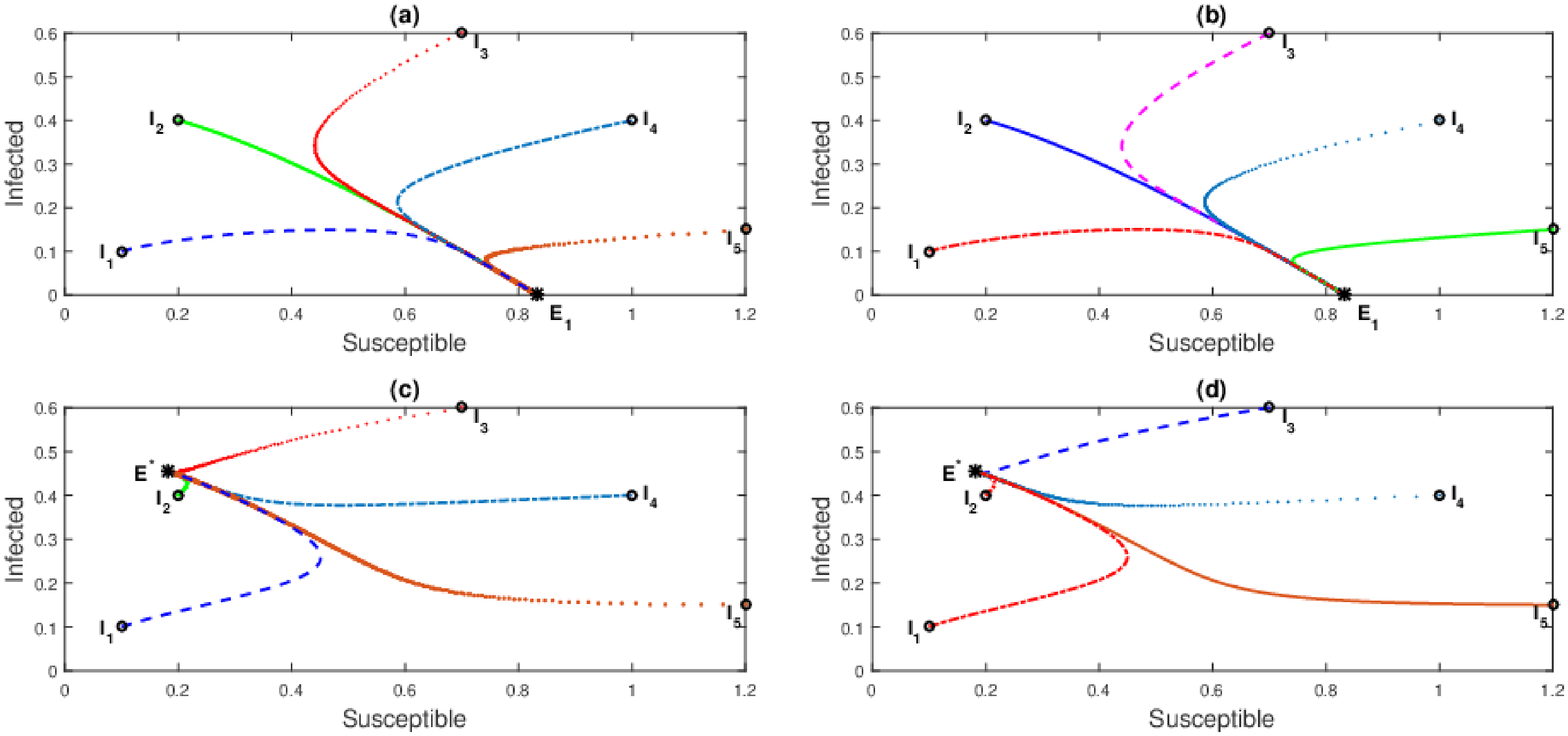}
		\caption{Phase portraits of the continuous system \eqref{continuous_1} (left panel) and discrete system \eqref{NSFD_1} (right panel). Figs (a) and (b) show that all solutions converge to the disease free equilibrium point $E_1=(0.8333,0)$ for $\beta=0.1$. Figs (c) and (d) depict that all solutions converge to the endemic equilibrium point $E^*=(0.1818,0.4545)$ for $\beta=0.3$. Other parameters are $b_x=0.6$, $b_y=0.4$, $u_x=0.1$, $u_y=0.2$, $K=1$, $e=0.02$ as in \cite{LNEM95}. Step size for the discrete model is considered as $h=0.1$.}
		\label{phase_plane_1}
	\end{center}
\end{figure}

In this section, we present some numerical simulations to validate the similar qualitative behavior of our discrete models with its corresponding continuous models. For this, we consider the same parameter set as in Lipsitch et al.\ \cite{LNEM95}: $b_x=0.6$, $b_y=0.4$, $u_x=0.1$, $u_y=0.2$, $K=1$, $e=0.02$. We consider different initial values $I_1=(0.1,0.1)$, $I_2=(0.2,0.4)$, $I_3=(0.7,0.6)$, $I_4=(1,0.4)$ and $I_5=(1.2,0.15)$ for both continuous and discrete systems. Step size $h=0.1$ is kept fixed in all simulations for the discrete systems. If $\beta$ takes the value $0.1$, the parameter set satisfies conditions of Theorems \ref{cont_theorem_1}(ii) and \ref{NSFD_theorem_1}(ii). In this case, all solutions starting from different initial points converge to the infection free point $E_1=(0.8333,0)$ in case of both the continuous system \eqref{continuous_1} (Fig. \ref{phase_plane_1}(a)) and the discrete system \eqref{NSFD_1} (Fig. \ref{phase_plane_1}(b)). For $\beta=0.3$, conditions of Theorems \ref{cont_theorem_1}(iii) and \ref{NSFD_theorem_1}(iii) are satisfied and all solution trajectories reach to the coexistence equilibrium point $E^*=(0.1818,0.4545)$ for both the systems as shown in Fig. \ref{phase_plane_1}(c)--\ref{phase_plane_1}(d). These figures indicate that the behavior of the continuous system \eqref{continuous_1} and the discrete system \eqref{NSFD_1} are qualitatively similar.
\begin{figure}[H]
	\begin{center}
		\includegraphics[height=3.5in, width=5.5in]{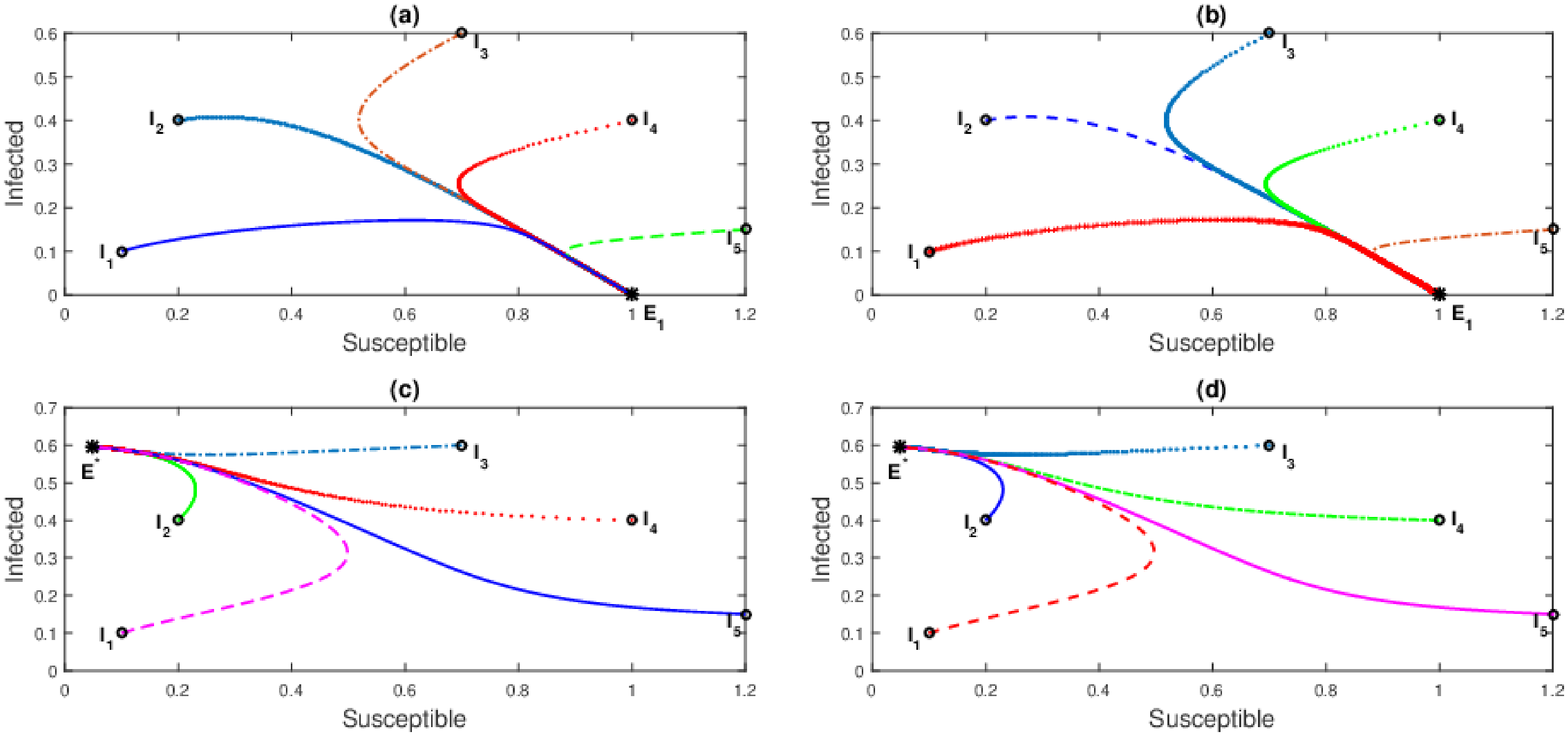}
		\includegraphics[height=1.75in, width=5.5in]{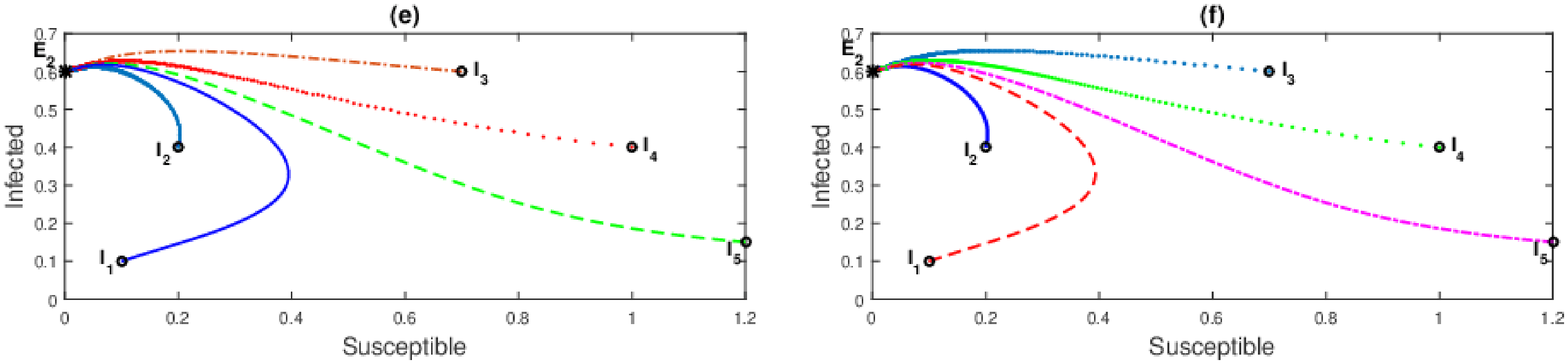}
		\caption{Phase portraits of the continuous system \eqref{continuous_2} (left panel) and discrete system \eqref{NSFD_2} (right panel). Figs. (a) and (b) show that all solutions converge to the disease free point $E_1^H=(1,0)$ for $\beta=0.1$. Figs. (c) and (d) depict that all solutions converge to the endemic point $E^*=(0.0476,0.5952)$ for $\beta=0.3$. Figs. (e) and (f) show that all solutions converge to the susceptible free point $E_2^H=(0,0.6)$ for $\beta=0.42$. Other parameters are $b_x=0.6$, $b_y=0.4$, $u_x=0.1$, $u_y=0.2$, $K=1.2$ as in \cite{LNEM95}. Step size for the discrete model is considered as $h=0.1$.}
		\label{phase_plane_2}	
	\end{center}
\end{figure}

To show dynamic consistency of the continuous system \eqref{continuous_2} and discrete system \eqref{NSFD_2}, we plotted the phase portraits of both systems in Fig. \ref{phase_plane_2}. We considered the same initial points, the same set of parameter values as in \cite{LNEM95} with $e=0$ and the same step size as in Fig. \ref{phase_plane_1}.The conditions of Theorem \ref{cont_theorem_2}(ii) and Theorem \ref{NSFD_theorem_2}(ii) are satisfied when $\beta=0.1$. In this case all solutions of both the systems converge to the point $E_1^H=(1,0)$ (Figs. \ref{phase_plane_2}(a)--\ref{phase_plane_2}(b)). For $\beta=0.3$, conditions of Theorem \ref{cont_theorem_2}(iv) and Theorem \ref{NSFD_theorem_2}(iv) are satisfied. Consequently, all solutions reach to the interior point $E_H^*=(0.0476,0.5952)$ (Figs. \ref{phase_plane_2}(c)--\ref{phase_plane_2}(d)). If we take $\beta=0.42$ then all conditions of Theorem \ref{cont_theorem_2}(iii) and Theorem \ref{NSFD_theorem_2}(iii) are satisfied. All solutions in this case converge to the susceptible free equilibrium point $E_2^H=(0,0.6)$ in both cases (Figs. \ref{phase_plane_2}(e)--\ref{phase_plane_2}(f)).

To observe dynamical consistency of the discrete system \eqref{NSFD_3} with its corresponding continuous system \eqref{continuous_3}, we plotted phase diagrams of both systems in Fig. \ref{phase_plane_3}. The same parameter set as in \cite{LNEM95} with $e=0$, $\beta=0$ was considered and the initial points, step size remained unchanged. Phase portraits of the continuous system (Fig. \ref{phase_plane_3}(a)) and that of the discrete system (Fig. \ref{phase_plane_3}(b)) show that all solutions reach to the infection free point $E_1^V=(1,0)$, indicating the dynamic consistency of both systems.
\begin{figure}[H]
	\begin{center}
		\includegraphics[height=2.0in, width=5.5in]{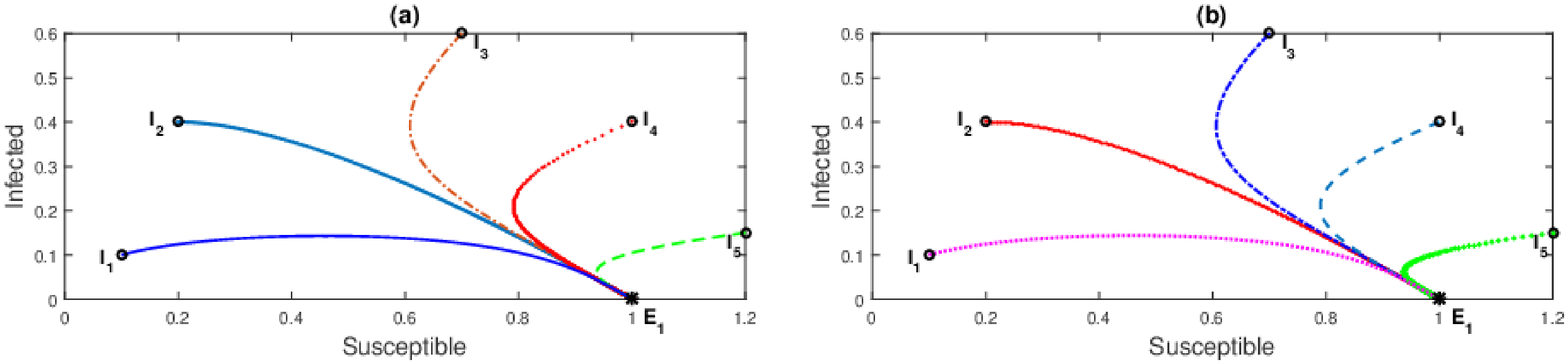}
		\caption{Phase portrait of the continuous system \eqref{continuous_3} (Fig. a) and that of the discrete system \eqref{NSFD_3} (Fig. b) indicate that all solutions converge to the infection free point $E_1^V=(1,0)$ in each case. The parameters are $b_x=0.6$, $b_y=0.4$, $u_x=0.1$, $u_y=0.2$ and $K=1.2$ as in \cite{LNEM95}. Step size for the discrete model is considered as $h=0.1$.}
		\label{phase_plane_3}
	\end{center}
\end{figure}
\section{Summary}
We here considered a continuous time epidemic model, where infection spreads through imperfect vertical transmission and horizontal transmission in a density dependent asexual host population. Stability of different equilibrium points are presented with respect to the basic reproduction number and relative birth \& death rates of susceptible \& infected hosts. A discrete version of the continuous system is constructed following nonlocal approximation technique and its dynamics has been shown to be identical with that of the continuous system. The proposed discrete model is shown to be positive, implying that its solutions remains positive for all future time whenever it starts with positive initial value. The dynamics of the discrete model have been shown to be independent of the step size. Our simulation results also show dynamic consistency of the discrete models with its corresponding continuous model. Two submodels of the general discrete model have also been shown to have the identical dynamics with their continuous continuous counterparts.
\section*{Acknowledgements} {Research of P. Saha is supported by CSIR; F. No: 09/096(0909)/2017-EMR-I and research of N. Bairagi is supported by SERB, DST; F. No: MTR/2017/000032.
	\bibliographystyle{plain}

\end{document}